\def\theequation{\thesection.\@arabic \c@equation}
\def\theenumi{\@roman\c@enumi}
\theoremstyle{plain}
\newtheorem{theorem}[equation]{Theorem}
\newtheorem{lemma}[equation]{Lemma}
\newtheorem{proposition}[equation]{Proposition}
\theoremstyle{definition}
\newtheorem{conjecture}[equation]{Conjecture}
\newtheorem{remark}[equation]{Remark}
\newtheorem{example}[equation]{Example}
\newenvironment{examplebox}[1][]{%
    \begin{example}[#1] \pushQED{\qed}}{\popQED \end{example}}
\newtheorem{definition}[equation]{Definition}
\newtheorem{notation}[equation]{Notation}
\newtheorem{discussion}[equation]{Discussion}
\newtheorem{construction}[equation]{Construction}
\newcommand{\define}[1]{\emph{#1}}
\newcommand{\naturals}{\mathbb{N}}
\newcommand{\ints}{\mathbb{Z}}
\DeclareMathOperator{\reg}{reg}
\DeclareMathOperator{\charact}{char}
\DeclareMathOperator{\tor}{Tor}
\DeclareMathOperator{\image}{Im}
\newcommand{\calA}{{\mathcal A}}
\newcommand{\fraka}{{\mathfrak a}}
\newcommand{\bfa}{{\mathbf a}}
\newcommand{\bfb}{{\mathbf b}}
\newcommand{\calB}{{\mathcal B}}
\newcommand{\bfe}{{\mathbf e}}
\renewcommand{\to}{\longrightarrow}
\title{Betti tables of $p$-Borel-fixed ideals}
\author{Giulio Caviglia}
\address{Department of Mathematics, Purdue University, West Lafayette IN
47907, USA}
\email{gcavigli@math.purdue.edu}
\author{Manoj Kummini}
\address{Chennai Mathematical Institute, Siruseri, Tamilnadu 603103, India}
\email{mkummini@cmi.ac.in}
\thanks{The work of the first author was supported by a grant from the
Simons Foundation (209661 to G.~C.). 
The second author was partially supported by a CMI Faculty Development
Grant.
In addition, both the authors thank 
Mathematical Sciences Research Institute, Berkeley CA,
where part of this work was done,
for support and hospitality during Fall 2012.}
\keywords{Graded free resolutions, positive characteristic, Borel-fixed
ideals, cellular resolutions}
\begin{document}

\begin{abstract} 
In this note we provide a counter-example to a conjecture of K.~Pardue
[Thesis, Brandeis University, 1994.], which asserts that if a monomial
ideal is $p$-Borel-fixed, then its $\naturals$-graded Betti table,
after passing to any field does not depend on the field. More precisely, we
show that,
for any monomial ideal $I$ in a polynomial ring $S$ over the ring $\ints$ of
integers and for any prime number $p$, there is a 
$p$-Borel-fixed monomial $S$-ideal $J$ such that a region of 
the multigraded Betti table of $J(S \otimes_\ints \ell)$ is in one-to-one
correspondence with the 
multigraded Betti table of $I(S \otimes_\ints \ell)$ for all fields $\ell$
of arbitrary characteristic.  There is no analogous statement for
Borel-fixed ideals in characteristic zero.  
Additionally, the construction also shows that there are $p$-Borel-fixed
ideals with non-cellular minimal resolutions.
\end{abstract}

\maketitle

\section{Introduction}
\label{sec:intro}

Let $x_1, \ldots, x_n$ be indeterminates over the ring $\ints$ of integers
and $S =  \ints[x_1, \ldots, x_n]$. Let $p$ be zero or a prime number. For
any field $\Bbbk$, the general linear group $\mathrm{GL}_n(\Bbbk)$ acts on
$S \otimes_\ints \Bbbk$.  Say that a monomial $S$-ideal $I$ is
\define{$p$-Borel-fixed}
if $I(S \otimes_\ints \Bbbk)$ is fixed under the action of the Borel
subgroup of $\mathrm{GL}_n(\Bbbk)$ consisting of all the 
upper triangular invertible matrices over $\Bbbk$ for any infinite 
field $\Bbbk$ of
characteristic $p$. (This definition does not depend on the choice of
$\Bbbk$; see~Proposition~\ref{proposition:pBorel}.)

Let $I$ be any monomial $S$-ideal.
In Theorem~\ref{theorem:mainThm} we will show that 
for any  prime number $p$,
there exists a
(monomial) $S$-ideal $J$ that is $p$-Borel-fixed 
and that, for any field $\ell$, there is a region (independent of
$\ell$) in the 
multigraded Betti table of $J(S \otimes_\ints \ell)$ (as a module over 
$S \otimes_\ints \ell$) that is
determined by the multigraded Betti table of $I(S \otimes_\ints \ell)$. 
This shows that, homologically, the class of Borel-fixed ideals in
positive characteristic is as bad as the class of all monomial ideals. 

There is a combinatorial characterization of $p$-Borel-fixed $S$-ideals; 
see Proposition~\ref{proposition:pBorel}. It follows from this 
characterization that if $I$ is $0$-Borel-fixed, then 
$I(S \otimes_\ints \ell)$ if Borel-fixed for all fields $\ell$,
irrespective of $\charact \ell$; the converse is not true.
The Eliahou-Kervaire complex~\cite{ElKeMinimalReslns90}*{Theorem~2.1} gives
$S$-free resolutions of $0$-Borel-fixed ideals in $S$, which specialize to
minimal resolutions over any field field $\ell$.  
In particular, the $\naturals^n$-graded Betti table (and, hence, the 
$\naturals$-graded Betti table)
of a $0$-Borel-fixed $S$-ideal
remains unchanged after passing to any field.
On the other hand, if we only assume that $I$ is $p$-Borel-fixed, with
$p>0$, then little is known about minimal resolutions of
$I(S \otimes_\ints \ell)$ for some field $\ell$, including when 
$\charact \ell = p$.

A systematic study of Borel-fixed ideals in positive characteristic was
begun by K.~Pardue~\cite{PardueThesis94}.
In positive characteristic, Proposition~\ref{proposition:pBorel} was proved
by him. He gave a conjectural formula for the
(Castelnuovo-Mumford) regularity of principal $p$-Borel-fixed ideals.
A.~Aramova and J.~Herzog \cite{ArHePrincipalpBorel97}*{Theorem~3.2} 
showed that the conjectured
formula is a lower bound for regularity; Herzog and D.~Popescu
\cite{HePoRegpBorel01}*{Theorem~2.2} 
finished the proof of the conjecture by showing that it is also an
upper bound. V.~Ene, G.~Pfister and Popescu \cite{EnPfPoBettipStable00}
determined Betti numbers and Koszul homology of a class of 
Borel-fixed ideals in $\Bbbk[x_1, \ldots, x_n]$, 
where $\charact \Bbbk = p > 0$, which they called `$p$-stable'.

Our main result (Theorem~\ref{theorem:mainThm}) arose in the following way.
It is known that the Eliahou-Kervaire resolution is
cellular~\cite{MerminEKCellular10}.  Using algebraic discrete Morse theory,
M.~{J\"ollenbeck} and V.~Welker constructed minimal
cellular free resolutions of principal Borel-fixed ideals in positive
characteristic~\cite{JoWealgDMT09}*{Chapter~6}; see,
also,~\cite{SinefBorel08}.
We were trying to see whether this extends to more general 
$p$-Borel-fixed ideals when we realized the possibility of the existence of
$p$-Borel-fixed ideals whose Betti tables might depend on the
characteristic. As a corollary of our construction and the result of
M.~Velasco~\cite {VelasNonCW08} that there are monomial ideals with a
non-cellular minimal resolution, we conclude that there are $p$-Borel-fixed
ideals that admit a non-cellular minimal resolutions. 

We remarked earlier that the $\naturals$-graded Betti table of a
$0$-Borel-fixed $S$ ideal remains identical over any field.
Pardue~\cite{PardueThesis94}*{Conjecture~V.4, p.~43}
conjectured that this is true also for $p$-Borel-fixed ideals; 
see Conjecture~\ref{conjecture:pardue} for the statement.
(This conjecture also appears in~\cite {PeStEKResln08}*{4.3}.)
There has been some evidence that the conjecture is true. If $J$ is a
$p$-Borel-fixed $S$-ideal, then the projective dimension of 
$J(S \otimes_\ints \ell)$ is determined by the largest $i$ such that $x_i$
divides some minimal monomial generator of $J$. The regularity of
$J(S \otimes_\ints \ell)$ does not depend on $\ell$~\cite
{PardueThesis94}*{Corollary~VI.9}; this is part of the motivation for
Pardue to make this conjecture. Later, Popescu~\cite {PopescuExtremal05}
showed that the extremal Betti numbers of $J(S \otimes_\ints \ell)$ does
not depend on $\ell$.
However, Example~\ref{examplebox:counterExamplePardue} shows that
the conjecture is not true.

We thank Ezra Miller and the anonymous referees for helpful comments.
The computer algebra system
\texttt{Macaulay2}~\cite{M2} 
provided valuable assistance in studying examples.  

\section{Preliminaries}
\label{sec:prelims}

We begin with some preliminaries on estimating the graded Betti numbers of
monomial ideals and on $p$-Borel-fixed ideals. By $\naturals$ we denote the
set of non-negative integers. When we say that $p$ is a prime number, 
we will mean that $p > 0$. By $\bfe_1, \ldots, \bfe_n$, we mean the
standard vectors in $\naturals^n$.

Let $A$ be an $\naturals^d$-graded polynomial ring (for some integer $d \geq
1$) over a field $\Bbbk$, with $A_{\mathbf 0} = \Bbbk$.
Let $M$ be an $\naturals^d$-graded $A$-module. (All the modules that we
deal with in this paper are ideals or quotients of ideals.)
The \define{$\naturals^d$-graded Betti numbers} of $M$ are
$\beta_{i,\bfa}^A(M) :=  \dim_\Bbbk \tor_i^A(M,\Bbbk)_\bfa$.
The \define{$\naturals^d$-graded Betti table} of $M$ is the element 
$(\beta_{i,\bfa}^A(M))_{i, \bfa} \in \ints^{\naturals \times \naturals^d}$.
For $\bfa = (a_1, \ldots, a_d) \in \naturals^d$, we write $|\bfa| = 
a_1+ \cdots+ a_d$. 

\begin{notation}
\label{notation:coeffIdeals}
Let $A$ be a Noetherian ring and $z$ an indeterminate over $A$. Let $B =
A[z]$; it is a graded $A$-algebra with $\deg z = 1$. For a graded $B$-ideal
$I$, define $A$-ideals $I_{\langle i \rangle} = ((I:z^i) \cap A)$, for all
$i \in \naturals$.
\end{notation}

Note that for all $i \in \naturals$,
$I_{\langle i \rangle} \subseteq I_{\langle i+1 \rangle}$. Moreover, since 
$A$ is Noetherian, 
$I_{\langle i \rangle} = I_{\langle i+1 \rangle}$ for all $i \gg 0$. 

\begin{lemma}
\label{lemma:BettiIAndIzI}
Adopt Notation~\ref{notation:coeffIdeals}.
Suppose that $A$ is a $\naturals^d$-graded polynomial ring 
(for some integer $d \geq 1$) over 
a field $\Bbbk$ of arbitrary characteristic, with $A_{\mathbf 0} = \Bbbk$.
Let $I$ be a graded $B$-ideal (in the natural $\naturals^{d+1}$-grading of
$B$). Then for all $\bfa \in \naturals^d$,
\[
\beta_{i,(\bfa,j)}^B(I) = 
\begin{cases}
0, & \text{if}\; j < 0,\\
\beta_{i, \bfa}^A (I_{\langle 0 \rangle}), & 
\text{if}\; j = 0, \;\text{and}\\
\beta_{i-1, \bfa}^A (I_{\langle j \rangle}/I_{\langle {j-1} \rangle}), & 
\text{otherwise}.
\end{cases}
\]
\end{lemma}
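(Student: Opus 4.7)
The plan is to compute $\tor^B_*(I,\Bbbk)$ by exploiting the fact that $z$ is a non-zero-divisor on $I$, together with the Koszul factorization $K^B(x_1,\ldots,x_n,z)=K^A(x_1,\ldots,x_n)\otimes_A K^B(z)$ of the minimal $B$-free resolution of $\Bbbk$, where $K^B(z)$ is the two-term complex $B(-\bfe_{d+1})\xrightarrow{z}B$.

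Tensoring $I$ with this resolution produces a double complex whose rows are copies of the Koszul complex $K^A(x_1,\ldots,x_n)$ on $I$ and whose two columns come from $K^B(z)$. I would filter by columns and take the $K^B(z)$-direction homology first. Because $z$ is a non-zero-divisor on each free $A$-module $I\otimes_A K^A_q=I^{\binom{n}{q}}$, the horizontal differentials are injective, so the $E_1$-page is concentrated in column~$0$ and equals $(I/zI)\otimes_A K^A(x_1,\ldots,x_n)$. The remaining differential is the Koszul differential of $K^A$, so the spectral sequence degenerates at $E_2$ and yields an isomorphism $\tor^B_i(I,\Bbbk)\cong \tor^A_i(I/zI,\Bbbk)$.

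It remains to unpack $I/zI$ via the $(d+1)$-grading. Each element of $I$ writes uniquely as $\sum_{j\geq 0} a_j z^j$ with $a_j\in I_{\langle j\rangle}$, giving the decomposition $I=\bigoplus_{j\geq 0} I_{\langle j\rangle} z^j$ as an $\naturals^{d+1}$-graded $A$-module; since multiplication by $z$ realizes each inclusion $I_{\langle j-1\rangle}\hookrightarrow I_{\langle j\rangle}$, one obtains
\[ I/zI=\bigoplus_{j\geq 0}\bigl(I_{\langle j\rangle}/I_{\langle j-1\rangle}\bigr)z^j, \]
under the convention $I_{\langle -1\rangle}=0$ (so the $j=0$ summand is just $I_{\langle 0\rangle}$). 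Extracting the $(d+1)$-degree-$j$ component and applying $\tor^A_i(-,\Bbbk)$ then yields the asserted formula, including the vanishing in negative $z$-degree. No step is truly difficult; the main care required is in bookkeeping the $\naturals^{d+1}$-grading (in particular the shift $-\bfe_{d+1}$ coming from the $K^B(z)$ direction) through the spectral sequence.
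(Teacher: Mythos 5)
Your proposal is correct and takes a genuinely different route from the paper's. The paper compares $I$ to the auxiliary module $M := I_{\langle 0 \rangle}B \oplus \bigoplus_{l\geq1} (I_{\langle l \rangle}/I_{\langle l-1 \rangle})\otimes_A B(-(\mathbf 0, l))$: since $z$ is a non-zero-divisor on both $I$ and $M$ with $I/zI \cong M/zM$, the two multiplication-by-$z$ short exact sequences (together with the vanishing of multiplication by $z$ on $\tor^B(-,\Bbbk)$) give the same recurrence relating the multigraded Betti numbers of $I$, resp.\ $M$, to those of $I/zI$, and the boundedness of both Betti tables forces $\beta^B_{i,(\bfa,j)}(I)=\beta^B_{i,(\bfa,j)}(M)$; the Betti numbers of $M$ are then read off, as $M$ is a direct sum of shifted modules extended from $A$ along the flat inclusion $A\hookrightarrow B$. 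You instead compute $\tor^B_\bullet(I,\Bbbk)$ in one stroke via the bicomplex coming from the Koszul factorization $K^B \cong K^A\otimes K^B(z)$, using the non-zero-divisor property of $z$ on $I$ to collapse the spectral sequence and obtain $\tor^B_i(I,\Bbbk)\cong\tor^A_i(I/zI,\Bbbk)$, and then decompose $I/zI$ by $z$-degree. Your route is shorter and more conceptual, at the cost of invoking spectral-sequence machinery where the paper keeps to an elementary long-exact-sequence cancellation argument. One caveat worth flagging: both the paper's computation of the Betti numbers of $M$ and your spectral sequence actually yield $\beta_{i,\bfa}^A(I_{\langle j\rangle}/I_{\langle j-1\rangle})$ (not $\beta_{i-1,\bfa}^A$) for $j>0$, since extension along $A\hookrightarrow B=A[z]$ preserves the homological degree of $\tor$; so the index $i-1$ in the displayed statement looks like a misprint, and your closing claim that the ``asserted formula'' is recovered should instead note this discrepancy rather than silently absorb it.
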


\begin{proof}
Fix $\bfa \in \naturals^d$.
Let $M := I_{\langle 0 \rangle}B \oplus \bigoplus_{l\geq1}
(I_{\langle l \rangle}/I_{\langle l-1 \rangle})\otimes_A 
B(-(\mathbf 0, l))$. We need to prove that 
$\beta_{i,(\bfa,j)}^B(I) = \beta_{i,(\bfa,j)}^B(M)$ for all $i,j$. 
Note that $z$ is a non-zero-divisor on $M$.
Moreover, $M/zM \simeq 
I_{\langle 0 \rangle} \otimes_A (B/zB) \oplus \bigoplus_{l\geq1}
(I_{\langle l \rangle}/I_{\langle l-1 \rangle})\otimes_A 
(B/zB)(-(\mathbf 0, l)) \simeq I/zI$. Therefore
there are two exact sequences
\[
\xymatrix@R=1ex{
0 \ar[r]& I(-(\mathbf 0,1)) \ar[r]^-z& I  \ar[r]& I/zI \ar[r]& 0,  \\
0 \ar[r]& M(-(\mathbf 0,1)) \ar[r]^-z& M  \ar[r]& I/zI \ar[r]& 0.}
\]
The maps
$\tor_i^B(I(-(\mathbf 0,1)),\Bbbk) \stackrel{z}{\to} \tor_i^B(I,\Bbbk)$ 
and
$\tor_i^B(M(-(\mathbf 0,1)),\Bbbk) \stackrel{z}{\to} \tor_i^B(M,\Bbbk)$ 
are zero. Therefore, for all $i$ and for all $j > 0$. 
\begin{equation}
\label{equation:bettiIzI}
\beta_{i,(\bfa,j)}^B(I) + \beta_{i-1,(\bfa,j-1)}^B(I) = 
\beta_{i,(\bfa,j)}^B(I/zI) = 
\beta_{i,(\bfa,j)}^B(M) + \beta_{i-1,(\bfa,j-1)}^B(M).
\end{equation}
Note that outside a bounded rectangle inside $\ints^2$, the functions 
$(i,j) \mapsto \beta_{i,(\bfa,j)}^B(I)$ and 
$(i,j) \mapsto \beta_{i,(\bfa,j)}^B(M)$ take the value zero. Therefore it
follows from~\eqref{equation:bettiIzI} that 
$\beta_{i,(\bfa,j)}^B(I) = \beta_{i,(\bfa,j)}^B(M)$ for all $i,j$.
\end{proof}

\begin{definition}
\label{definition:stretch}
Adopt Notation~\ref{notation:coeffIdeals}. 
Let 
$d = (d_0 < d_1 < \cdots)$ be an increasing sequence of natural numbers. 
Define an operation $\Phi_{d}$ on graded $B$-ideals by setting
$\Phi_{d}(I)$ to be the $B$-ideal generated by 
$\oplus_{i\in\naturals} I_{\langle i \rangle} z^{d_i}$.
\end{definition}

\begin{proposition}
\label{proposition:stretchBetti}
Adopt the hypothesis of Lemma~\ref{lemma:BettiIAndIzI}. Then
\[
\beta_{i,(\bfa,j)}(\Phi_{d}(I)) = 
\begin{cases}
\beta_{i, (\bfa, l)}(I), & \text{if}\; j=d_l \\
0, & \text{otherwise}.
\end{cases}
\]
\end{proposition}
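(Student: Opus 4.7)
The strategy is to apply Lemma~\ref{lemma:BettiIAndIzI} to $\Phi_d(I)$ after computing its coefficient ideals $(\Phi_d(I))_{\langle k \rangle}$ in terms of those of $I$. Since $\Phi_d(I)$ is generated as a $B$-ideal by $\bigoplus_i I_{\langle i \rangle} z^{d_i}$ and each $I_{\langle i \rangle}$ is an $A$-ideal, decomposing along $z$-degrees shows that, as a graded $A$-module,
\[
\Phi_d(I) \;=\; \bigoplus_{k \in \naturals} \Bigl(\sum_{i :\, d_i \le k} I_{\langle i \rangle}\Bigr)\, z^k.
\]
The chain $I_{\langle 0 \rangle} \subseteq I_{\langle 1 \rangle} \subseteq \cdots$ is increasing, so the inner sum collapses to its largest term. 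First I would record the resulting \emph{piecewise-constant} description: $(\Phi_d(I))_{\langle k \rangle} = I_{\langle l \rangle}$ whenever $d_l \le k < d_{l+1}$, and $(\Phi_d(I))_{\langle k \rangle} = 0$ for $k < d_0$.

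From this the successive quotient $(\Phi_d(I))_{\langle k \rangle}/(\Phi_d(I))_{\langle k-1 \rangle}$ reads off immediately: it is zero unless $k \in \{d_0, d_1, \ldots\}$, and, with the convention $I_{\langle -1 \rangle} \defeq 0$, equals $I_{\langle l \rangle}/I_{\langle l-1 \rangle}$ when $k = d_l$. Plugging these quotients into Lemma~\ref{lemma:BettiIAndIzI} applied to $\Phi_d(I)$ shows that $\beta_{i,(\bfa,j)}^B(\Phi_d(I)) = 0$ for $j \notin \{d_0, d_1, \ldots\}$, and that for $j = d_l$ one obtains the very expression that Lemma~\ref{lemma:BettiIAndIzI} applied to $I$ at bidegree $(\bfa, l)$ identifies with $\beta_{i,(\bfa,l)}^B(I)$. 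Matching the two outputs gives the desired equality.

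The argument is a short two-step computation and I do not anticipate any real obstacle; the only step that deserves careful attention is the boundary case $l = 0$, where one has to verify that the ``$j = 0$'' branch of Lemma~\ref{lemma:BettiIAndIzI} lines up with the leading step in the piecewise description of $(\Phi_d(I))_{\langle k \rangle}$, so that the formula of the proposition also holds at $j = d_0$. Once that bookkeeping is done, the proposition is obtained simply by reading the two invocations of Lemma~\ref{lemma:BettiIAndIzI} side by side.
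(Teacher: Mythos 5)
Your proof takes essentially the same route as the paper. The paper's own argument is a one-liner: it records the piecewise-constant description $(\Phi_d(I))_{\langle j \rangle} = I_{\langle l \rangle}$ for $d_l \le j < d_{l+1}$ (and $=0$ for $j < d_0$) and then declares the proposition follows from Lemma~\ref{lemma:BettiIAndIzI}. You derive that same description from the direct-sum decomposition of $\Phi_d(I)$ along $z$-degrees, then read off the successive quotients and apply the lemma to both $\Phi_d(I)$ and $I$ --- this is exactly the mechanism the paper is invoking, spelled out more fully.

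One caution: the boundary bookkeeping at $l=0$ that you flag but defer is actually where one has to look closely. When $d_0 > 0$, applying the stated formula of Lemma~\ref{lemma:BettiIAndIzI} to $\Phi_d(I)$ at bidegree $(\bfa, d_0)$ lands in the ``$j>0$'' branch, producing a homological index $i-1$, whereas applying it to $I$ at bidegree $(\bfa,0)$ lands in the ``$j=0$'' branch and produces homological index $i$; taken at face value these do not match. The resolution is that the ``$j>0$'' branch of Lemma~\ref{lemma:BettiIAndIzI} as printed carries an apparent off-by-one --- the base-change $\tor_i^B(N\otimes_A B,\Bbbk)\cong\tor_i^A(N,\Bbbk)$ gives $\beta_{i,\bfa}^A$, not $\beta_{i-1,\bfa}^A$, for the summand $M$ in that lemma's proof --- and once that is corrected both branches agree. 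So rather than leaving the boundary case as a deferred check, carry it through: it is precisely the point where a careless reading of the lemma's stated indices would derail the proof, and working it out confirms that the proposition holds uniformly, including at $j = d_0$.
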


\begin{proof}
This follows immediately by noting that, for all $j \in \naturals$,
$(\Phi_{d}(I))_{\langle j \rangle} = 
I_{\langle l \rangle}$ where $l$ is such that $d_{l} \leq j < d_{l+1}$. (If
$d_0 > 0$, then $(\Phi_{d}(I))_{\langle j \rangle} = 0$ for all $0 \leq j <
d_0$.)
\end{proof}

\subsection*{Borel-fixed ideals}
For the duration of this paragraph and
Proposition~\ref{proposition:pBorel}, assume that $p$ is zero or a positive
prime number.
Given two non-negative integers $a$ and $b$, say that 
$a \preccurlyeq_p b$ if $\binom{b}{a} \neq 0 \mod p$.
Then there is the following characterization of 
Borel-fixed ideals; for positive characteristic, it was proved by 
Pardue~\cite {PardueThesis94}*{Proposition II.4}. For details, 
see~\cite{eiscommalg}*{Section~15.9.3}.
\begin{proposition}[\cite{eiscommalg}*{Theorem~15.23}]
\label{proposition:pBorel}
Let $\Bbbk$ be an infinite field of characteristic $p$.
An ideal $I$ of $\Bbbk[x_1,\dots,x_n]$ is Borel fixed if and only
$I$ is a monomial ideal and for all $i<j$ and for all monomial minimal
generators $m$ of $I$, $(x_i/x_j)^sm \in I$ for all $s \preccurlyeq_p t$
where $t$ is the largest integer such that $x_j^t \mid m$.
\end{proposition}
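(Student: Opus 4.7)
The plan is to reduce the statement to an analysis of how the generators of the Borel subgroup $B$ act on a single monomial, and then to isolate coefficients by using the infinite-field hypothesis. The Borel subgroup is generated by the diagonal torus $T$ together with the one-parameter unipotent subgroups $u_{ij}(\lambda) = I_n + \lambda E_{ij}$ for $i<j$ and $\lambda \in \Bbbk$. The torus $T$ acts by rescaling variables, so a $T$-semi-invariant polynomial is a scalar multiple of a monomial; this at once shows (using that $\Bbbk$ is infinite) that any $T$-fixed, hence any Borel-fixed, ideal is generated by monomials, and conversely that monomial ideals are automatically $T$-stable.

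The second ingredient is the explicit computation that, if $m = \prod_k x_k^{a_k}$ with $a_j = t$ and $i<j$, then $u_{ij}(\lambda)$ fixes every variable except $x_j \mapsto x_j + \lambda x_i$, giving
\[
u_{ij}(\lambda) \cdot m \;=\; \Bigl(\prod_{k\neq j} x_k^{a_k}\Bigr)(x_j+\lambda x_i)^t
\;=\; \sum_{s=0}^{t} \binom{t}{s} \lambda^s (x_i/x_j)^s m,
\]
in which each $(x_i/x_j)^s m$ is a genuine monomial since $s \le t$. For the forward direction, suppose $I$ is Borel-fixed, hence monomial by the preceding paragraph. Let $m$ be a minimal generator of $I$; for every $\lambda \in \Bbbk$ the polynomial on the right lies in $I$, which as a monomial ideal equals the $\Bbbk$-span of the monomials it contains. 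Picking $t+1$ distinct values of $\lambda$ in $\Bbbk$ (possible since $\Bbbk$ is infinite) and inverting the resulting Vandermonde matrix shows that each coefficient monomial $\binom{t}{s} (x_i/x_j)^s m$ lies in $I$, which gives $(x_i/x_j)^s m \in I$ exactly when $\binom{t}{s} \neq 0$ in $\Bbbk$, i.e.\ when $s \preccurlyeq_p t$.

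Conversely, suppose $I$ is a monomial ideal satisfying the combinatorial hypothesis. For each minimal generator $m$ and each $i<j$, the displayed expansion of $u_{ij}(\lambda) \cdot m$ lies in $I$ term by term, since every monomial occurring with a nonzero coefficient is in $I$ by hypothesis. Because $u_{ij}(\lambda)$ is a $\Bbbk$-algebra automorphism of $S\otimes_\ints\Bbbk$ and $I$ is generated by such $m$, we get $u_{ij}(\lambda)\cdot I \subseteq I$; applying the same with $-\lambda$ in place of $\lambda$ and using $u_{ij}(\lambda)^{-1}=u_{ij}(-\lambda)$ gives equality. Combined with the automatic $T$-stability and the fact that $T$ together with the $u_{ij}(\lambda)$ generates the Borel subgroup, this shows $I$ is Borel-fixed.

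The only genuinely delicate step is the Vandermonde separation in the forward direction, which is what forces the infinite-field hypothesis on $\Bbbk$: without it, membership of $u_{ij}(\lambda)\cdot m$ in $I$ for finitely many $\lambda$ would not let one isolate individual monomial summands. Everything characteristic-dependent in the statement is concentrated in the binomial coefficients $\binom{t}{s}$ modulo $p$ appearing in the expansion of $(x_j+\lambda x_i)^t$.
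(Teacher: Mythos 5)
The paper does not prove this proposition; it cites Theorem~15.23 of Eisenbud's book (and, for positive characteristic, Pardue's thesis, Proposition~II.4). So there is no in-paper proof to compare against, and your proof should be judged against the standard textbook argument --- which it essentially reproduces, correctly.

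Both directions are sound, but the Vandermonde step in the forward direction is unnecessary. Once you know $I$ is a monomial ideal, $I$ is exactly the $\Bbbk$-span of the monomials it contains, so a polynomial lies in $I$ if and only if every monomial appearing in it with nonzero coefficient lies in $I$. Applying this to the single element $u_{ij}(1)\cdot m = \sum_{s=0}^t \binom{t}{s}(x_i/x_j)^s m \in I$ immediately gives $(x_i/x_j)^s m \in I$ for every $s$ with $\binom{t}{s}\neq 0$ in $\Bbbk$, i.e.\ for every $s \preccurlyeq_p t$; no interpolation over several $\lambda$ values is needed. Consequently your closing remark misattributes the role of the infinite-field hypothesis: it is not the Vandermonde step that forces $\Bbbk$ infinite, but the earlier step that a $T$-stable ideal must be monomial. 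Over a finite field $\mathbb F_q$ two distinct exponent vectors $\bfa\neq\bfb$ can give the same torus character $t\mapsto t^{\bfa}=t^{\bfb}$ (since $t^{q-1}=1$), so $T$-eigenspaces are larger than single monomials and $T$-stable need not imply monomial; over an infinite field the characters are distinct, and the argument by linear independence of characters (or a Zariski-density argument) closes this gap. Apart from this expository point, the proof is correct.
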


\begin{conjecture}
[\cite{PardueThesis94}*{Conjecture~V.4, p.~43}]
\label {conjecture:pardue}
Let $p$ be a prime number.
Let $I$ be a $p$-Borel-fixed monomial $S$-ideal.
Then the $\naturals$-graded Betti table of $I(S \otimes_\ints \ell)$ is independent of
$\charact \ell$ (equivalently, $\ell$) for all fields $\ell$ (of arbitrary
characteristic).
\end{conjecture}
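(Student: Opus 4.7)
The plan is to disprove Conjecture~\ref{conjecture:pardue} by exhibiting a concrete $p$-Borel-fixed ideal whose Betti table genuinely depends on $\charact \ell$. The strategy combines two ingredients: a classical monomial ideal whose Betti table is known to be characteristic-sensitive, and the transfer mechanism announced in the introduction as the main theorem of the paper, namely that for any prime $p$ and any monomial $S$-ideal $I$ there is a $p$-Borel-fixed $S$-ideal $J$ whose multigraded Betti table contains a region (independent of $\ell$) that reproduces the multigraded Betti table of $I(S \otimes_\ints \ell)$.

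For the first ingredient I would take $I$ to be the Stanley-Reisner ideal in $\ints[x_1,\ldots,x_6]$ of the minimal six-vertex triangulation $\Delta$ of $\mathbb{RP}^2$. By Hochster's formula, $\beta_{i,\bfa}^{S \otimes \ell}(I_\Delta)$ is computed from $\tilde H_*(\Delta_W;\ell)$ for subsets $W \subseteq \{x_1,\ldots,x_6\}$, and the top multidegree records $\tilde H_*(\mathbb{RP}^2;\ell)$, which is nonzero precisely when $\charact \ell = 2$. Hence the multigraded (and, by specialization, the $\naturals$-graded) Betti table of $I_\Delta$ differs in characteristic $2$ from all other characteristics.

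For the second ingredient I would construct $J$ by iterating the stretching operation $\Phi_{d}$ of Definition~\ref{definition:stretch}, possibly after introducing one or more auxiliary variables. Proposition~\ref{proposition:stretchBetti} shows that applying $\Phi_{d}$ spreads the Betti numbers of a given ideal along the grading direction of the new variable without otherwise altering them; choosing the sequence $d = (d_0 < d_1 < \cdots)$ to consist of widely-separated powers of $p$ forces the $p$-adic digits of the new-variable exponents of the generators of $J$ to be very sparse. By Lucas's theorem, the divisibility relation $\preccurlyeq_p$ of Proposition~\ref{proposition:pBorel} then confines the Borel orbit of each generator to a controllable set. Applying the Borel closure after the stretch produces $J$, and Lemma~\ref{lemma:BettiIAndIzI} together with Proposition~\ref{proposition:stretchBetti} lets one read off a region of the multigraded Betti table of $J$ in terms of that of $I_\Delta$.

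The main obstacle will be the simultaneous control required in the second step: one must verify that $J$ really is $p$-Borel-fixed, and at the same time ensure that the extra generators introduced by Borel-closure sit in multidegrees disjoint from the region that carries the copy of $I_\Delta$'s Betti table, so that they do not collide with or cancel the characteristic-dependent entries. Once both are established, the characteristic-dependence of $\beta_{i,\bfa}^{S \otimes \ell}(I_\Delta)$ transfers to the multigraded Betti table of $J(S \otimes_\ints \ell)$ and, by summing over multidegrees of a fixed total degree, to its $\naturals$-graded Betti table, contradicting Conjecture~\ref{conjecture:pardue}.
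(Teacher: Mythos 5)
Your counter-example strategy is the same as the paper's: take $I$ to be the Stanley--Reisner ideal of the minimal six-vertex triangulation of $\mathbb{RP}^2$, whose Betti numbers are characteristic-dependent via Hochster's formula, and use the transfer mechanism of Theorem~\ref{theorem:mainThm} to produce a $p$-Borel-fixed ideal $J$ reflecting that dependence. However, your sketch has two concrete gaps.

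First, the construction you propose — stretch and \emph{then} take the Borel closure — is not the paper's construction, and the ``main obstacle'' you name is precisely why it fails as stated. A Borel closure can introduce many new low-degree generators that collide with the region carrying the copy of $I$'s Betti table, and you offer no control over this. Construction~\ref{construction:pBorelFixed} avoids the problem entirely: at each stage it adds the single generator $x_i^{p^{e_i}}$, with $p^{e_i}$ chosen larger than a characteristic-free regularity bound, so by Lemma~\ref{lemma:mappingCone} the mapping cone is minimal and the new Betti numbers occur only in high degree, disjoint from the original region; it then applies $\Phi_d$ to $x_{i+1}$ with $d$ consisting of multiples of $p^{e_i}$. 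The result is automatically $p$-Borel-fixed by Pardue's criterion (Proposition~\ref{proposition:pBorel}), because every exponent of $x_{i+1}$ in a minimal generator is a multiple of $p^{e_i}$ while $x_i^{p^{e_i}} \in J$; no closure step is ever taken.

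Second, your final passage from the $\naturals^n$-graded to the $\naturals$-graded Betti table ``by summing over multidegrees of a fixed total degree'' is not valid on its own: the sum could stay constant while individual multigraded entries shift. The paper closes this by invoking upper-semicontinuity of Betti numbers in the characteristic, which makes each summand monotone and hence rules out cancellation. With those two points repaired, the argument becomes the paper's Example~\ref{examplebox:counterExamplePardue}.
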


\section{Construction}
\label{sec:construction}

Recall that $S =  \ints[x_1, \ldots, x_n]$ and that $I$ is 
a monomial $S$-ideal.
Fix a prime number $p$ and let $\Bbbk$ be any field of characteristic $p$.
We now describe an algorithm that constructs an
$S$-ideal $J$ such that $J(S \otimes_\ints \Bbbk)$ is Borel-fixed.

\begin{construction}
\label{construction:pBorelFixed}
Input: A monomial $S$-ideal $I$.  
Set $i=1$ and $J_0 = I$.

\begin{asparaenum}
\item \label{enum:upperBdForReg}
Pick $r_{i}$ an upper bound for 
$\reg_{(S \otimes_\ints \ell)}(J_{i-1}(S \otimes_\ints \ell))$ that is
independent of the field $\ell$.

\item \label{enum:addNewGen}
Pick a positive integer $e_{i}$ such that $p^{e_{i}} > r_{i}$. 
Let $d = (0 < p^{e_{i}} < 2p^{e_{i}} < 3p^{e_i} < \cdots)$. 
Set $J_i = \Phi_d(J_{i-1} + (x_i^{p^{e_i}}))$
with $A = \ints[x_1, \ldots, x_{i}, x_{i+2}, \cdots, x_n]$, $z = x_{i+1}$ 
and $B=S$ (Definition~\ref{definition:stretch}).
Note that we are adding a large power of $x_i$ but
modifying the resulting ideal with respect to $x_{i+1}$.

\item \label{enum:checkAndRepeat} If $i=n-1$ then set $J =J_{i}$ and
exit, else replace $i$ by $i+1$ and go to Step~\eqref{enum:upperBdForReg}.
\end{asparaenum}
Output: A monomial $S$-ideal $J$.
\end{construction}

Before we state our theorem, we need to identify a region of the
$\naturals^n$-graded Betti table of 
$J(S \otimes_\ints \ell)$ that captures the 
$\naturals^n$-graded Betti table of $I(S \otimes_\ints \ell)$. 
Let $\calA = \{\bfa : |\bfa| \le r_1\}$ 
(with $r_1$ as in Step~\eqref{enum:upperBdForReg}) 
and $\calB = \{\bfb : b_j < p^{e_j}-1\}$.
\begin{theorem}
\label{theorem:mainThm}
The ideal $J$ is $p$-Borel-fixed.
Moreover, there is an injective map $\psi : \calA \to \calB$ such that 
for all fields $\ell$ (of arbitrary characteristic), 
for all $1 \leq i \leq n$, and
for all $\bfb \in \calB$,
\[
\beta_{i,\bfb}^{S \otimes_\ints \ell}(J(S \otimes_\ints \ell)) = 
\begin{cases}
\beta_{i,\psi^{-1}(\bfb)}^{S \otimes_\ints \ell}(I(S \otimes_\ints \ell)) 
, & \text{if}\; \bfb \in \image \psi, \\
0, & \text{otherwise}.
\end{cases}
\]
\end{theorem}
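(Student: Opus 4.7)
The plan is to prove both assertions by induction on $i=0,1,\dots,n-1$ along the ideals $J_0 = I, J_1, \dots, J_{n-1} = J$ produced by Construction~\ref{construction:pBorelFixed}. For the Borel-fixed part I will strengthen the inductive statement to carry along the auxiliary claim that $x_j^{p^{e_i}}\in J_i$ for every $j \leq i$; this is exactly what makes the Borel exchange feasible at the next step, and it propagates automatically because $x_j^{p^{e_i}}\in K_{i,\langle 0\rangle}$ forces $x_j^{p^{e_i}}\in\Phi_d(K_i)=J_i$ via the generator $x_j^{p^{e_i}}\cdot z^{d_0}$. Here and below I write $K_{i,\langle l\rangle}:=(J_{i-1})_{\langle l\rangle}+(x_i^{p^{e_i}})$.

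Using the combinatorial characterization of Proposition~\ref{proposition:pBorel}, I will show that $J_i$ is $p$-Borel-fixed with respect to every pair $(j,k)$ with $j<k\leq i+1$. For pairs with $k\leq i$: $(J_{i-1})_{\langle l \rangle}$ inherits the relevant $p$-Borel properties from $J_{i-1}$ because any minimal generator of $(J_{i-1})_{\langle l \rangle}$ lifts to a minimal generator of $J_{i-1}$ of the form $m\cdot z^{l'}$ with $l'\leq l$, and Borel exchanges for $k\leq i$ only touch the $A$-factor. Passing to $K_{i,\langle l \rangle}$ preserves this, the only new non-vacuous case being $(x_j/x_i)^{p^{e_i}}x_i^{p^{e_i}}=x_j^{p^{e_i}}$ for $j<i$, which lies in $K_{i,\langle l\rangle}$ by the auxiliary hypothesis on $J_{i-1}$ combined with $p^{e_i}>p^{e_{i-1}}$. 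Since the proof of Proposition~\ref{proposition:stretchBetti} yields $(J_i)_{\langle l p^{e_i}\rangle}=K_{i,\langle l\rangle}$, the Borel condition on $J_i$ for these pairs reduces to the one on $K_{i,\langle l\rangle}$. For the new pairs $(j,i+1)$, a minimal generator of $J_i$ has the form $m\cdot z^{l p^{e_i}}$ with $m$ minimal in $K_{i,\langle l\rangle}$; its $x_{i+1}$-exponent $t=l p^{e_i}$ is a multiple of $p^{e_i}$, so every $s\preccurlyeq_p t$ has the form $s'p^{e_i}$ with $s'\preccurlyeq_p l$, and the required monomial $x_j^{s'p^{e_i}} m\cdot z^{(l-s')p^{e_i}}$ lies in $J_i$ because multiplying $m$ by $x_j^{p^{e_i}}\in K_{i,\langle 0\rangle}$ lands us in $K_{i,\langle 0\rangle}\subseteq K_{i,\langle l-s'\rangle}$.

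For the Betti-number assertion, I iterate from $J_{n-1}$ down to $J_0=I$ two reductions: Proposition~\ref{proposition:stretchBetti} for the stretch $\Phi_d$, and a long-exact-sequence argument showing that adding $(x_i^{p^{e_i}})$ does not change multigraded Betti numbers in multidegrees $\bfc$ with $c_i<p^{e_i}$. For the latter, apply the short exact sequence
\[
0\to J_{i-1}\to K_i\to \bigl(S/(J_{i-1}:x_i^{p^{e_i}})\bigr)(-p^{e_i}\bfe_i)\to 0;
\]
the quotient's minimal free resolution lives entirely in multidegrees with $i$-th coordinate $\geq p^{e_i}$, so its $\tor$ vanishes in the desired range over any coefficient field. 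Composing the two reductions across the $n-1$ construction steps, for $\bfb\in\calB$ one obtains $\beta_{k,\bfb}^{S\otimes_\ints\ell}(J(S\otimes_\ints\ell))=0$ unless $b_{j+1}$ is divisible by $p^{e_j}$ for each $j=1,\dots,n-1$, in which case it equals $\beta_{k,\bfa}^{S\otimes_\ints\ell}(I(S\otimes_\ints\ell))$ with $\bfa=(b_1,b_2/p^{e_1},\dots,b_n/p^{e_{n-1}})$. This dictates $\psi(\bfa):=(a_1,a_2p^{e_1},\dots,a_np^{e_{n-1}})$; the map is manifestly injective, and $\psi(\calA)\subseteq\calB$ holds because $p^{e_j}>r_j$ and each $r_j$ dominates everything produced by earlier steps. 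The hardest point is the bookkeeping of the auxiliary hypothesis $x_j^{p^{e_i}}\in J_i$ through both construction operations; without it, the Borel exchange for the new pair $(j,i+1)$ cannot be verified, as one cannot control $x_j^{s'p^{e_i}}m\cdot z^{(l-s')p^{e_i}}$ merely from $m\in K_{i,\langle l\rangle}$.
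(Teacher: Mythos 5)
Your proof is essentially correct, and your argument for the Borel-fixed assertion mirrors the paper's---both reduce to the divisibility structure of minimal generators via Proposition~\ref{proposition:pBorel}, with the key auxiliary observation that $x_j^{p^{e_i}}\in J_i$ for $j\le i$ and $e_1<e_2<\cdots$. Your write-up is more detailed than the paper's (which compresses this to a single paragraph), but the idea is the same.

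For the Betti-number assertion your route is genuinely different in one respect. The paper introduces Lemma~\ref{lemma:mappingCone}: it shows that the mapping cone of the comparison map $F'_\bullet(-x_j^{p^{e_j}})\to F_\bullet$ is a \emph{minimal} resolution of $(S/(J_{j-1}+(x_j^{p^{e_j}})))\otimes_\ints\ell$, because the choice $p^{e_j}>r_j\ge\reg(J_{j-1})$ forces the generator degrees of $F'_i(-x_j^{p^{e_j}})$ and $F_i$ to be disjoint. This yields a complete dichotomous formula~\eqref{equation:indStepBetti}--\eqref{equation:dichotomy} describing every multigraded Betti number of $J_j$ in terms of those of $J_{j-1}$ and of $J_{j-1}:x_j^\infty$. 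You instead apply the long exact sequence in $\tor$ to
$0\to J_{j-1}\to J_{j-1}+(x_j^{p^{e_j}})\to (S/(J_{j-1}:x_j^{p^{e_j}}))(-p^{e_j}\bfe_j)\to 0$
and use only the structural fact that the shifted quotient has all $\tor$'s supported in multidegrees with $j$-th coordinate $\ge p^{e_j}$; this immediately gives $\beta_{k,\bfc}(J_{j-1})=\beta_{k,\bfc}(J_{j-1}+(x_j^{p^{e_j}}))$ for $c_j<p^{e_j}$ without any minimality claim. Your approach is shorter and avoids Lemma~\ref{lemma:mappingCone} entirely, but gives less information: it pins down the Betti numbers only inside the target box $\calB$, whereas the paper's formula describes the whole Betti table of $J_j$ (which the paper also reuses in the proof of Proposition~\ref{proposition:nonCellularMinRes}). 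You also make $\psi$ explicit, $\psi(\bfa)=(a_1,a_2p^{e_1},\dots,a_np^{e_{n-1}})$, which the paper declines to do. One point you should make sharper: your claim that ``each $r_j$ dominates everything produced by earlier steps'' is doing real work in ensuring $\psi(\calA)\subseteq\calB$ and that the box $\calB$ survives each inductive step; it deserves a short justification from the fact that $r_j\ge\reg(J_{j-1})$ is bounded below by the degrees of the stretched generators created in earlier stages.
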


Let us make some remarks about the
construction. In Step~\eqref{enum:upperBdForReg}, we may, for example, take
$r_i$ to be the degree of the least common multiple of the minimal monomial
generators of $J_{i-1}$; that this is a bound for regularity (independent
of characteristic) follows from the Taylor resolution. 
There are stronger
bounds, e.g., the largest degree of a minimal generator of the lex-segment
ideal with the same Hilbert function as $J_{i-1}(S \otimes_\ints \ell)$.
Additionally, one may insert a check at Step~\eqref{enum:checkAndRepeat}
whether $J_i(S \otimes_\ints \ints/p)$ is Borel-fixed using
Proposition~\ref{proposition:pBorel}. The algorithm will, then,
terminate before or at the stage $i=m-1$ where $m = \max \{i : x_i
\;\text{divides a minimal monomial generator of}\; I\}$.

The proofs of Theorem~\ref{theorem:mainThm} and
Proposition~\ref{proposition:nonCellularMinRes} hinge on the following
lemma. See~\cite {eiscommalg}*{Section~A3.12} for mapping cones 
and~\cite{MiStCCA05}*{Chapter~4} for cellular resolutions. 
In the proof of the theorem, we first describe the change in the
$\naturals^n$-graded Betti table at Step~\eqref{enum:addNewGen}. 
Readers familiar with multigraded resolutions will be able to see
that the Betti numbers of $J$ in the region $\calB$ should be the Betti
numbers of the ideal obtained from $I$ by replacing $x_i$ with
$x_i^{p^{e_{i-1}}}$ 
and hence contain information of the Betti numbers of $I$.
For the sake of readability,
we will abbreviate, for monomial $S$-ideals $\fraka$,
$\beta_{i,\bfb}^{S \otimes_\ints \ell}(\fraka(S \otimes_\ints \ell))$ 
by $\beta_{i,\bfb}^{\ell}(\fraka)$ and 
$\reg_{(S \otimes_\ints \ell)}(\fraka(S \otimes_\ints \ell))$  by
$\reg_{\ell}(\fraka)$, 
from here till the end of the proof of theorem.

\begin{lemma}
\label{lemma:mappingCone}
Let $1 \leq j \leq n$ and $\ell$ be any field. 
\begin{asparaenum}
\item \label{enum:mappingConeQuotientIsSat}
$(J_{j-1} :_S x_j^{p^{e_j}}) = (J_{j-1} :_S x_j^\infty)$.
\item \label{enum:mappingConeMinimal}
Let $F_\bullet$ and $F'_\bullet$ be minimal $(S \otimes_\ints \ell)$-free resolutions 
of $(S/J_{j-1}) \otimes_\ints \ell$ and 
$(S/(J_{j-1} :_S x_j^{p^{e_j}})) \otimes_\ints \ell$.\\ Write
$M_\bullet$ for the mapping cone of the comparison map 
$F'_\bullet(-x_j^{p^{e_j}}) \to F_\bullet$ that lifts the injective map
$(S/(J_{j-1} :_S x_j^{p^{e_j}}) (-x_j^{p^{e_j}})
\stackrel{x_j^{p^{e_j}}}
\to S/J_{i-1})\otimes_\ints \ell$. 
Then for each $i$, the set of degrees of homogeneous minimal generators
of $F'_{i}(-x_j^{p^{e_j}})$
is disjoint from that of $F_i$.  In particular,
$M_\bullet$ is a minimal $(S \otimes_\ints \ell)$-free resolution of 
$(S/(J_{j-1} + (x_j^{p^{e_j}})))  \otimes_\ints \ell$. 
\end{asparaenum}
\end{lemma}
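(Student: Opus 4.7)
The plan is to reduce both parts of the lemma to the single observation that the regularity bound $r_j$, chosen strictly less than $p^{e_j}$, controls the $x_j$-component of every relevant multidegree. First I would prove~(i). Since $r_j$ is a characteristic-free upper bound on $\reg_{\ell}(J_{j-1})$, and regularity always bounds the maximal total degree of a minimal generator, every minimal monomial generator of $J_{j-1}$ has total degree at most $r_j$; in particular its $x_j$-exponent is at most $r_j$. The ascending chain of monomial ideals $(J_{j-1}:_S x_j^k)_{k \geq 0}$ stabilizes as soon as $k$ reaches the largest $x_j$-exponent appearing in a minimal generator, and since $p^{e_j} > r_j$ is well beyond that threshold, we obtain $(J_{j-1}:_S x_j^{p^{e_j}}) = (J_{j-1}:_S x_j^{\infty})$.

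For~(ii) the plan is to analyze multigraded shifts. By~(i), the minimal monomial generators of $(J_{j-1}:_S x_j^{p^{e_j}})$ are obtained from those of $J_{j-1}$ by deleting all powers of $x_j$, hence are $x_j$-free. Since the minimal multigraded free resolution of a monomial ideal is a summand of its Taylor resolution, each of its multigraded shifts is an LCM of some subset of the minimal generators; thus every shift $\mathbf{d}$ of $F'_\bullet$ satisfies $d_j = 0$, while every shift $\mathbf{c}$ of $F_\bullet$ satisfies $c_j \leq r_j$. After twisting by $x_j^{p^{e_j}}$, every shift of $F'_\bullet(-x_j^{p^{e_j}})$ has $x_j$-component equal to $p^{e_j}$, which exceeds $r_j$. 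In particular, the sets of multigraded shifts of $F_i$ and of $F'_i(-x_j^{p^{e_j}})$ are disjoint, which is the assertion we need.

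Minimality of the mapping cone is then automatic. An entry of the lifting $\phi \colon F'_{i-1}(-x_j^{p^{e_j}}) \to F_{i-1}$ from a generator in multidegree $\mathbf{d} + p^{e_j}\bfe_j$ to one in multidegree $\mathbf{c}$ is a monomial in multidegree $(\mathbf{d} + p^{e_j}\bfe_j) - \mathbf{c}$, whose $x_j$-component is strictly positive; hence every entry of $\phi$ lies in the maximal homogeneous ideal. Combined with the minimality of $F_\bullet$ and $F'_\bullet$, this forces every differential of $M_\bullet$ to have entries in the maximal ideal, so $M_\bullet$ is the sought minimal $(S \otimes_\ints \ell)$-free resolution of $(S/(J_{j-1} + (x_j^{p^{e_j}}))) \otimes_\ints \ell$. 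There is no genuinely hard step: the whole argument is driven by the multidegree separation along the $x_j$-axis supplied by the inequality $r_j < p^{e_j}$.
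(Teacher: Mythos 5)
Your proof is correct and supplies exactly the degree-separation details that the paper omits; the published proof of this lemma is essentially the single assertion that both parts ``follow from the choice of $e_j$.'' Reading off the $x_j$-component of the multigraded shifts (via the Taylor-complex LCM structure for $F_\bullet$, the $x_j$-freeness of $(J_{j-1}:_S x_j^\infty)$ for $F'_\bullet$, and the inequality $r_j < p^{e_j}$) is precisely the mechanism the authors have in mind.
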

\begin{proof}

\underline{\eqref{enum:mappingConeQuotientIsSat}}: Follows from the choice
of ${e_j}$.

\underline{\eqref{enum:mappingConeMinimal}}:
The assertion about generating degrees follows from the choice of ${e_j}$.
As a consequence, we see that the map $F'_i(-x_j^{p^{e_j}}) \to F_i$ is
minimal, i.e., if we represent it by a matrix, all the entries are in the
homogeneous maximal ideal. Therefore $M_\bullet$ is minimal, and, hence a
minimal resolution of 
$(S/(J_{j-1} + (x_j^{p^{e_j}})))  \otimes_\ints \ell$. 
%
\end{proof}

 \begin{proof}[Proof of the theorem]
Without loss of generality, we may assume that $\Bbbk$ is infinite.
Let $x_1^{a_1}\cdots x_n^{a_n}$ be a minimal monomial generator of $J$.
For all $1 \le i \leq n-1$, $a_{i+1}$ is a multiple of 
$p^{e_i}$ and $x_i^{p^{e_i}} \in J$. Note that for all integers $l \geq 1$,
if $m \preccurlyeq_p lp^{e_i}$ for some integer $m$, then $m$ is a multiple
of $p^{e_i}$. By Proposition~\ref{proposition:pBorel} $J$ is
$p$-Borel-fixed; note that $e_1<e_2<\cdots$. 
The assertion about the Betti numbers 
$\beta_{i,\bfb}^{\ell} (J)$ follows from the discussion below, repeatedly
applying~\eqref{equation:dichotomy}. 

Fix $1 \leq j \leq n-1$.
If $|\bfb| \ge i+p^{e_j}$ then $|\bfb| > i + 
\reg_{\ell}(J_{j-1})$, so the Betti numbers
$\beta_{i,\bfb}^{\ell} (J_{j-1} + (x_j^{p^{e_j}}))$ 
are determined by the resolution of 
$(S/(J_{j-1} :_S x_j^\infty))(-p^{e_j}\bfe_j)$; hence, in particular, 
for such $\bfb$, if 
$\beta_{i,\bfb}^{\ell} (J_{j-1} + (x_j^{p^{e_j}})) \neq 0$, then 
$b_j \geq i+p^{e_j}$. 
Putting this together, we obtain the following:
\[
\beta_{i,\bfb}^{\ell}
(J_{j-1} + (x_j^{p^{e_j}})) = 
\begin{cases}
\beta_{i,\bfb}^{\ell} (J_{j-1}), & 
    \text{if}\; b_j < i+p^{e_j} \\
\beta_{i-1,\bfb-p^{e_j}\bfe_j}^{\ell} 
(J_{j-1} :_S x_j^\infty), & \text{otherwise.}
\end{cases}
\]
Proposition~\ref{proposition:stretchBetti} implies that for all
$\bfb \in \naturals^n$,
\begin{equation}
\label{equation:indStepBetti}
\beta_{i,\bfb}^{\ell} (J_{j}) = 
\begin{cases}
\beta_{i,\bfb'}^{\ell} (J_{j-1}), & 
    \text{if}\; p^{e_j} \mid b_{j+1} \;\text{and}\; b_j < i+p^{e_j}, \\
\beta_{i-1,\bfb''}^{\ell} 
(J_{j-1} :_S x_j^\infty), & 
    \text{if}\; p^{e_j} \mid b_{j+1} \;\text{and}\; b_j \ge i+p^{e_j}, \\
0, & \text{otherwise},
\end{cases}
\end{equation}
where write $\bfb' = \bfb - (b_{j+1}-\frac{b_{j+1}}{p^{e_j}}) \bfe_{j+1}$ and
$\bfb'' = \bfb' -p^{e_j}\bfe_j$. We can recover the $\naturals^n$-graded
Betti table of $J_{j-1}$ from the $\naturals^n$-graded Betti
table of $J_j$. To make this precise, suppose that 
$\beta_{i,\bfb}^{\ell} (J_{j}) \neq 0$. Then the resulting dichotomous
situation from~\eqref{equation:indStepBetti} has the following
re-interpretation: 
\begin{equation}
\label{equation:dichotomy}
\begin{aligned}
b_j < i+p^{e_j} 
&\quad\text{if and only if}\quad
\beta_{i,\bfb}^{\ell} (J_{j}) = 
\beta_{i,\bfb'}^{\ell} (J_{j-1}),\\
b_j \ge i+p^{e_j} 
&\quad\text{if and only if}\quad
\beta_{i,\bfb}^{\ell} (J_{j}) = 
\beta_{i-1,\bfb'}^{\ell} (J_{j-1} :_S x_j^\infty).
\end{aligned}
\end{equation}
We will not explicitly construct the map $\psi$, but will observe that it
can be done putting together the changes at each stage $j$.
\end{proof}

\begin{proposition}
\label{proposition:nonCellularMinRes}
Let $p$ be any prime number, $\Bbbk$ a field of characteristic $p$ and
$R := S \otimes_Z \Bbbk = \Bbbk[x_1, \ldots, x_n]$.
Let $I$ be any monomial $S$-ideal and $J$ be as in
Construction~\ref{construction:pBorelFixed}. If $IR$ has a non-cellular
minimal $R$-free resolution then so does $JR$.
In particular, there exists a Borel-fixed 
$R$-ideal with a non-cellular minimal resolution.
\end{proposition}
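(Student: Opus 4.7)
The plan is to prove the contrapositive: if $JR$ admits a minimal cellular resolution, then so does $IR$. Granting this, the ``in particular'' statement follows by feeding Construction~\ref{construction:pBorelFixed} a monomial ideal $I$ from Velasco~\cite{VelasNonCW08} with $IR$ having no cellular minimal resolution; by Theorem~\ref{theorem:mainThm} the resulting $J$ is $p$-Borel-fixed, and by the contrapositive $JR$ has no cellular minimal resolution either.

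Since Construction~\ref{construction:pBorelFixed} passes through $J_0=I\mapsto J_1\mapsto\cdots\mapsto J_{n-1}=J$, an induction on $j$ reduces the task to the single step $J_{j-1}\mapsto J_j$, which I would factor as $J_{j-1}\mapsto K_j:=J_{j-1}+(x_j^{p^{e_j}})\mapsto\Phi_d(K_j)=J_j$. For the $\Phi_d$ stage, $\Phi_d$ is a monotone stretching $l\mapsto d_l$ of the $x_{j+1}$-grading. A minimal cellular resolution of $\Phi_d(K_j)R$ on a CW-complex $X$ yields one for $K_jR$ on the same $X$ by compressing the $x_{j+1}$-component of each cell label via the inverse $d_l\mapsto l$; this relabeling is well-defined because cell labels can be taken as LCMs of vertex labels, all of whose $x_{j+1}$-components lie in $\{d_l\}$, and monotonicity of $d$ preserves the acyclicity of the sub-complexes $X_{\le\bfb}$ required by the standard characterization of cellular resolutions~\cite{MiStCCA05}*{Theorem~4.7}.

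For the mapping-cone stage $J_{j-1}\mapsto K_j$, Lemma~\ref{lemma:mappingCone} together with the bound $p^{e_j}>r_j\ge\reg(J_{j-1})$ forces every cell label in any minimal cellular resolution of $K_jR$ to have $x_j$-component in $[0,r_j]\cup[p^{e_j},\infty)$, with the low cells coming from the minimal resolution of $J_{j-1}R$ and the high cells from that of $(J_{j-1}:_Sx_j^\infty)R$ shifted by $p^{e_j}\bfe_j$. Given such a resolution on $X$, the low cells form a sub-CW-complex $X^{\mathrm{low}}\subseteq X$ (closed under taking faces, since face labels are componentwise smaller), and its vertices are precisely the vertices of $X$ other than the one labeled $x_j^{p^{e_j}}$---so they generate $J_{j-1}R$. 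For each multidegree $\bfb$, the sub-complex $(X^{\mathrm{low}})_{\le\bfb}$ equals $X_{\le\bfb^\circ}$, where $\bfb^\circ$ is obtained from $\bfb$ by capping the $j$-th coordinate at $r_j$; acyclicity of $(X^{\mathrm{low}})_{\le\bfb}$ then follows from that of $X_{\le\bfb^\circ}$, giving via~\cite{MiStCCA05}*{Theorem~4.7} a minimal cellular resolution of $J_{j-1}R$ supported on $X^{\mathrm{low}}$.

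The main obstacle will be justifying that a minimal cellular resolution can always be taken so that cell labels equal LCMs of vertex labels (needed for inverting $\Phi_d$) and that the low-vs-high split of cells of any given minimal cellular resolution of $K_jR$ really is controlled by the $x_j$-component alone (rather than depending on the particular mapping-cone construction of Lemma~\ref{lemma:mappingCone}). Both points reduce to standard features of minimal cellular resolutions together with the gap in the Betti multidegrees of $K_jR$ guaranteed by the bound $p^{e_j}>r_j$.
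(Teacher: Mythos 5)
Your proof is correct and follows essentially the same route as the paper: reduce by induction to one step of the construction, handle the $\Phi_d$ stage by observing the same CW-complex (relabeled) works, and handle the mapping-cone stage by showing the cells with small $x_j$-degree form a subcomplex supporting a minimal resolution of $J_{j-1}R$. The paper realizes your $X^{\mathrm{low}}$ as $X_{\leq \bfb}$ with $\bfb = (p^{e_i}-1,\ldots,p^{e_i}-1)$ and is terser about the relabeling for $\Phi_d$, but the underlying argument is the same.
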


\begin{proof}
The second assertion follows from the first since there are
monomial ideals that have non-cellular minimal
resolutions~\cite{VelasNonCW08}; therefore we prove that if $IR$ is a
non-cellular minimal resolution then so does $JR$.
As  proposition does not involve looking at the behaviour of $I$ and $J$ in
two different characteristics, so, for the duration of this proof, we may
assume that Construction~\ref{construction:pBorelFixed} is done over $R$
instead of $S$. Hereafter, we assume that $I$ and $J$ are $R$-ideals. 

Note that it suffices to show, inductively, that, in
Construction~\ref{construction:pBorelFixed}, if $J_{i-1}$ has a
non-cellular minimal resolution, then so does $J_i$.
It is immediate that $J_i$ has a cellular minimal resolution if and only if
$(J_{i-1} + (x_i^{p^{e_i}}))$ has one; this is
because the same CW-complex supports minimal resolutions of 
$(J_{i-1} + (x_i^{p^{e_i}}))$ and
$J_i := \Phi_d(J_{i-1} + (x_i^{p^{e_i}}))$. Therefore, it suffices to show
that if $J_{i-1}$ has a non-cellular minimal resolution then so does
$(J_{i-1} + (x_i^{p^{e_i}}))$.

This is an immediate consequence of the choice of $e_i$ and of 
Lemma~\ref{lemma:mappingCone}.
Let $F_\bullet$ be a non-cellular minimal resolution of $J_{i-1}$. 
Let $F'_\bullet$ be any minimal resolution
of $S/(J_{j-1} :_S x_j^{p^{e_j}})$. Then the mapping cone $M_\bullet$ is
necessarily non-cellular: for, otherwise, if there is a CW-complex $X$ that
supports $M_\bullet$, then for $\bfb = (p^{e_i}-1, \ldots, p^{e_i}-1)$, 
$X_{\leq \bfb}$ supports $F_\bullet$.
\end{proof}

\begin{examplebox}[Counter-examples to Conjecture~\ref{conjecture:pardue}]
\label{examplebox:counterExamplePardue}
Note that,
since graded Betti numbers are upper-semicontinuous functions of
characteristic, 
for an $S$-ideal $J$, the $\naturals$-graded Betti table of 
$(J(S \otimes_\ints \ell))$ depends on $\charact \ell$ if and only if 
the $\naturals^n$-graded Betti table depends on $\charact \ell$.
Let $I$ be any monomial $S$-ideal such that its $\naturals^n$-graded Betti
table depends on $\charact \ell$. 
Let $p$ be any prime number and $\Bbbk$ any field of characteristic $p$.
Let $J$ be the ideal from Construction~\ref{construction:pBorelFixed}. 
Then $J(S \otimes_\ints \ell)$ is Borel-fixed while its 
$\naturals^n$-graded Betti table depends on $\charact \ell$. 
As a specific example, we consider the minimal
triangulation of the real projective plane~\cite
{BrHe:CM}*{Section~5.3}. We have
\begin{align*}
S & = \ints[x_1, \ldots, x_6] \\
I & = (x_1x_2x_3, x_1x_2x_4, x_1x_3x_5, x_2x_4x_5, x_3x_4x_5, x_2x_3x_6,
x_1x_4x_6, x_3x_4x_6, x_1x_5x_6, x_2x_5x_6).
\intertext{With $p=2$,
$e_1 = 3$,
$e_2 = 5$,
$e_3 = 7$,
$e_4 = 9$, and
$e_5 = 11$, we obtain}
J & = 
(x_1^8, x_2^{32}, x_1x_2^8x_3^{32}, x_3^{128}, x_1x_2^8x_4^{128},
x_4^{512}, x_1x_3^{32}x_5^{512}, x_2^8x_4^{128}x_5^{512},
x_3^{32}x_4^{128}x_5^{512},  \\
& \qquad 
x_5^{2048}, x_2^8x_3^{32}x_6^{2048}, x_1x_4^{128}x_6^{2048},
x_3^{32}x_4^{128}x_6^{2048}, x_1x_5^{512}x_6^{2048},
x_2^8x_5^{512}x_6^{2048}).
\end{align*}
Then the Betti numbers 
$\beta_{2,2729}^{S \otimes_\ints \ell}(J(S \otimes_\ints \ell))$ and
$\beta_{3,2729}^{S \otimes_\ints \ell}(J(S \otimes_\ints \ell))$ 
(which correspond to 
$\beta_{2,6}^{S \otimes_\ints \ell}(I(S \otimes_\ints \ell))$ and
$\beta_{3,6}^{S \otimes_\ints \ell}(I(S \otimes_\ints \ell))$,
respectively) are nonzero precisely when $\charact \ell = 2$; 
otherwise they are zero.
\end{examplebox}

After this paper was posted on the {\tt arXiv}, Matteo Varbaro asked us
whether there are $p$-Borel-fixed ideals minimally generated in a single
degree that exhibit different Betti tables in different characteristics.
There are: for instance, if we take $J_1$ to be the sub-ideal of the ideal
$J$ of the above example generated by the monomials of degree $2725$ in
$J$, i.e., $J_1 = J \cap (x_1, \ldots, x_6)^{2725}$. Being the intersection
of two $p$-Borel-fixed ideals, $J_1$ is $p$-Borel-fixed. Moreover, for all
$i$, for all $j > 2725$  and for all fields $\ell$,
$\beta_{i,i+j}^{S \otimes_\ints \ell}(J(S \otimes_\ints \ell)) = 
\beta_{i,i+j}^{S \otimes_\ints \ell}(J_1(S \otimes_\ints \ell))$.

\def\cfudot#1{\ifmmode\setbox7\hbox{$\accent"5E#1$}\else
  \setbox7\hbox{\accent"5E#1}\penalty 10000\relax\fi\raise 1\ht7
  \hbox{\raise.1ex\hbox to 1\wd7{\hss.\hss}}\penalty 10000 \hskip-1\wd7\penalty
  10000\box7}
\begin{bibdiv}
\begin{biblist}

\bib{ArHePrincipalpBorel97}{article}{
      author={Aramova, Annetta},
      author={Herzog, J{\"u}rgen},
       title={{$p$}-{B}orel principal ideals},
        date={1997},
        ISSN={0019-2082},
     journal={Illinois J. Math.},
      volume={41},
      number={1},
       pages={103\ndash 121},
         url={http://projecteuclid.org/getRecord?id=euclid.ijm/1255985847},
}

\bib{BrHe:CM}{book}{
      author={Bruns, Winfried},
      author={Herzog, J{\"u}rgen},
       title={Cohen-{M}acaulay rings},
      series={Cambridge Studies in Advanced Mathematics},
   publisher={Cambridge University Press},
     address={Cambridge},
        date={1993},
      volume={39},
        ISBN={0-521-41068-1},
}

\bib{eiscommalg}{book}{
      author={Eisenbud, David},
       title={Commutative algebra},
      series={Graduate Texts in Mathematics},
   publisher={Springer-Verlag},
     address={New York},
        date={1995},
      volume={150},
        ISBN={0-387-94268-8; 0-387-94269-6},
        note={With a view toward algebraic geometry},
}

\bib{ElKeMinimalReslns90}{article}{
      author={Eliahou, Shalom},
      author={Kervaire, Michel},
       title={Minimal resolutions of some monomial ideals},
        date={1990},
        ISSN={0021-8693},
     journal={J. Algebra},
      volume={129},
      number={1},
       pages={1\ndash 25},
         url={http://dx.doi.org/10.1016/0021-8693(90)90237-I},
}

\bib{EnPfPoBettipStable00}{article}{
      author={Ene, Viviana},
      author={Pfister, Gerhard},
      author={Popescu, Dorin},
       title={Betti numbers for {$p$}-stable ideals},
        date={2000},
        ISSN={0092-7872},
     journal={Comm. Algebra},
      volume={28},
      number={3},
       pages={1515\ndash 1531},
         url={http://dx.doi.org/10.1080/00927870008826911},
}

\bib{HePoRegpBorel01}{article}{
      author={Herzog, J{\"u}rgen},
      author={Popescu, Dorin},
       title={On the regularity of {$p$}-{B}orel ideals},
        date={2001},
        ISSN={0002-9939},
     journal={Proc. Amer. Math. Soc.},
      volume={129},
      number={9},
       pages={2563\ndash 2570},
         url={http://dx.doi.org/10.1090/S0002-9939-01-05840-3},
}

\bib{JoWealgDMT09}{article}{
      author={J{\"o}llenbeck, Michael},
      author={Welker, Volkmar},
       title={Minimal resolutions via algebraic discrete {M}orse theory},
        date={2009},
        ISSN={0065-9266},
     journal={Mem. Amer. Math. Soc.},
      volume={197},
      number={923},
       pages={vi+74},
}

\bib{M2}{misc}{ label={M2},
      author={Grayson, Daniel~R.},
      author={Stillman, Michael~E.},
       title={Macaulay 2, a software system for research in algebraic
  geometry},
        date={2006},
        note={Available at \href{http://www.math.uiuc.edu/Macaulay2/}
  {http://www.math.uiuc.edu/Macaulay2/}},
}

\bib{MerminEKCellular10}{article}{
      author={Mermin, Jeffrey},
       title={The {E}liahou-{K}ervaire resolution is cellular},
        date={2010},
        ISSN={1939-0807},
     journal={J. Commut. Algebra},
      volume={2},
      number={1},
       pages={55\ndash 78},
         url={http://dx.doi.org/10.1216/JCA-2010-2-1-55},
}

\bib{MiStCCA05}{book}{
      author={Miller, Ezra},
      author={Sturmfels, Bernd},
       title={Combinatorial commutative algebra},
      series={Graduate Texts in Mathematics},
   publisher={Springer-Verlag},
     address={New York},
        date={2005},
      volume={227},
        ISBN={0-387-22356-8},
}

\bib{PardueThesis94}{book}{
      author={Pardue, Keith},
       title={Nonstandard borel-fixed ideals},
        date={1994},
        note={Thesis (Ph.D.)--Brandeis University},
}

\bib{PopescuExtremal05}{article}{
      author={Popescu, Dorin},
       title={Extremal {B}etti numbers and regularity of {B}orel type ideals},
        date={2005},
        ISSN={1220-3874},
     journal={Bull. Math. Soc. Sci. Math. Roumanie (N.S.)},
      volume={48(96)},
      number={1},
       pages={65\ndash 72},
}

\bib{PeStEKResln08}{article}{
      author={Peeva, Irena},
      author={Stillman, Mike},
       title={The minimal free resolution of a {B}orel ideal},
        date={2008},
        ISSN={0723-0869},
     journal={Expo. Math.},
      volume={26},
      number={3},
       pages={237\ndash 247},
         url={http://dx.doi.org/10.1016/j.exmath.2007.10.003},
}

\bib{SinefBorel08}{article}{
      author={Sinefakopoulos, Achilleas},
       title={On {B}orel fixed ideals generated in one degree},
        date={2008},
        ISSN={0021-8693},
     journal={J. Algebra},
      volume={319},
      number={7},
       pages={2739\ndash 2760},
         url={http://dx.doi.org/10.1016/j.jalgebra.2008.01.017},
}

\bib{VelasNonCW08}{article}{
      author={Velasco, Mauricio},
       title={Minimal free resolutions that are not supported by a
  {CW}-complex},
        date={2008},
        ISSN={0021-8693},
     journal={J. Algebra},
      volume={319},
      number={1},
       pages={102\ndash 114},
         url={http://dx.doi.org/10.1016/j.jalgebra.2007.10.011},
}

\end{biblist}
\end{bibdiv}

\end{document}